\begin{document}
\newtheorem{problem}{Problem}
\newtheorem{theorem}{Theorem}
\newtheorem{lemma}[theorem]{Lemma}
\newtheorem{crit}[theorem]{Criterion}
\newtheorem{claim}[theorem]{Claim}
\newtheorem{cor}[theorem]{Corollary}
\newtheorem{prop}[theorem]{Proposition}
\newtheorem{definition}{Definition}
\newtheorem{question}[theorem]{Question}
\newtheorem{rem}[theorem]{Remark}
\newtheorem{Note}[theorem]{Notation}

\def\cA{{\mathcal A}}
\def\cB{{\mathcal B}}
\def\cC{{\mathcal C}}
\def\cD{{\mathcal D}}
\def\cE{{\mathcal E}}
\def\cF{{\mathcal F}}
\def\cG{{\mathcal G}}
\def\cH{{\mathcal H}}
\def\cI{{\mathcal I}}
\def\cJ{{\mathcal J}}
\def\cK{{\mathcal K}}
\def\cL{{\mathcal L}}
\def\cM{{\mathcal M}}
\def\cN{{\mathcal N}}
\def\cO{{\mathcal O}}
\def\cP{{\mathcal P}}
\def\cQ{{\mathcal Q}}
\def\cR{{\mathcal R}}
\def\cS{{\mathcal S}}
\def\cT{{\mathcal T}}
\def\cU{{\mathcal U}}
\def\cV{{\mathcal V}}
\def\cW{{\mathcal W}}
\def\cX{{\mathcal X}}
\def\cY{{\mathcal Y}}
\def\cZ{{\mathcal Z}}

\def\A{{\mathbb A}}
\def\B{{\mathbb B}}
\def\C{{\mathbb C}}
\def\D{{\mathbb D}}
\def\E{{\mathbb E}}
\def\F{{\mathbb F}}
\def\G{{\mathbb G}}
\def\I{{\mathbb I}}
\def\J{{\mathbb J}}
\def\K{{\mathbb K}}
\def\L{{\mathbb L}}
\def\M{{\mathbb M}}
\def\N{{\mathbb N}}
\def\O{{\mathbb O}}
\def\P{{\mathbb P}}
\def\Q{{\mathbb Q}}
\def\R{{\mathbb R}}
\def\S{{\mathbb S}}
\def\T{{\mathbb T}}
\def\U{{\mathbb U}}
\def\V{{\mathbb V}}
\def\W{{\mathbb W}}
\def\X{{\mathbb X}}
\def\Y{{\mathbb Y}}
\def\Z{{\mathbb Z}}
\def\OK{{\mathcal{O}_\K}}

\def\ep{{\mathbf{e}}_p}
\def\eq{{\mathbf{e}}_q}
\def\cal#1{\mathcal{#1}}

\def\scr{\scriptstyle}
\def\\{\cr}
\def\({\left(}
\def\){\right)}
\def\[{\left[}
\def\]{\right]}
\def\<{\langle}
\def\>{\rangle}
\def\fl#1{\left\lfloor#1\right\rfloor}
\def\rf#1{\left\lceil#1\right\rceil}
\def\le{\leqslant}
\def\ge{\geqslant}
\def\eps{\varepsilon}
\def\mand{\qquad\mbox{and}\qquad}

\def\sssum{\mathop{\sum\ \sum\ \sum}}
\def\ssum{\mathop{\sum\, \sum}}
\def\ssumw{\mathop{\sum\qquad \sum}}

\def\vec#1{\mathbf{#1}}
\def\inv#1{\overline{#1}}
\def\num#1{\mathrm{num}(#1)}
\def\dist{\mathrm{dist}}

\def\fA{{\mathfrak A}}
\def\fB{{\mathfrak B}}
\def\fC{{\mathfrak C}}
\def\fU{{\mathfrak U}}
\def\fV{{\mathfrak V}}

\newcommand{\bflambda}{{\boldsymbol{\lambda}}}
\newcommand{\bfxi}{{\boldsymbol{\xi}}}
\newcommand{\bfrho}{{\boldsymbol{\rho}}}
\newcommand{\bfnu}{{\boldsymbol{\nu}}}
\newcommand{\norm}[1]{\left\lVert#1\right\rVert}

\def\GL{\mathrm{GL}}
\def\SL{\mathrm{SL}}
\def\Log{\mathrm{Log}}

\def\Hba{\overline{\cH}_{a,m}}
\def\Hta{\widetilde{\cH}_{a,m}}
\def\Hb1{\overline{\cH}_{m}}
\def\Ht1{\widetilde{\cH}_{m}}

\def\flp#1{{\left\langle#1\right\rangle}_p}
\def\flm#1{{\left\langle#1\right\rangle}_m}

\def\Zm{\Z/m\Z}

\def\Err{{\mathbf{E}}}
\def\O{\mathcal{O}}

\def\cc#1{\textcolor{red}{#1}}
\newcommand{\commT}[2][]{\todo[#1,color=green!60]{Tim: #2}}
\newcommand{\commB}[2][]{\todo[#1,color=red!60]{Bryce: #2}}

\newcommand{\comm}[1]{\marginpar{%
\vskip-\baselineskip 
\raggedright\footnotesize
\itshape\hrule\smallskip#1\par\smallskip\hrule}}
\newcolumntype{L}{>{\raggedright\arraybackslash}X}

\def\xxx{\vskip5pt\hrule\vskip5pt}

\def\dmod#1{\,\left(\textnormal{mod }{#1}\right)}

\title[The covering radius of the logarithmic lattice of $\Q(\zeta_n)$]{An improved upper bound on the covering radius of the logarithmic lattice of $\Q(\zeta_n)$}

\author[J. Punch]{James Punch}
\address{School of Science, The University of New South Wales, Canberra, Australia}
\email{j.punch@unsw.edu.au}
 
\date{\today}
\pagenumbering{arabic}

\begin{abstract}
Let $\R^m$ be endowed with the Euclidean metric. The covering radius of a lattice $\Lambda \subset \R^m$ is the least distance $r$ such that, given any point of $\R^m$, the distance from that point to $\Lambda$ is not more than $r$. Lattices can occur via the unit group of the ring of integers in an algebraic number field $\K$, by applying a logarithmic embedding $\K^*\rightarrow \R^m$. In this paper, we examine those lattices which arise from the cyclotomic number field $\Q(\zeta_n)$, for a given positive integer $n\geq5$ such that $n\not \equiv 2\pmod{4}$. We then provide improvements to a result of de Araujo in \cite{deAraujo2024}, and conclude with an upper bound on the covering radius for this lattice in terms of $n$ and the number of its distinct prime factors. In particular, we improve \cite[Lemma 2]{deAraujo2024}, and show that, asymptotically, it can be improved no further.
\end{abstract}
\maketitle
\section{Introduction}
Suppose that $n$ is a positive integer not less than 5, and that $n\not\equiv 2\pmod{4}$. Let $\zeta_n$ be a primitive $n^\text{th}$ root of unity, and let $\K=\Q(\zeta_n)$. We denote by $\OK$ the ring of integers in $\K$, and its unit group by $\OK^*$. As in \cite[Theorem 38]{Marcus1977}, we let $\sigma_1,\ldots,\sigma_r$ be the real embeddings of $\K\rightarrow\R$, and $\sigma_{r+1},\ldots,\sigma_{r+s}$ be representatives of the pairs of complex embeddings $\K\rightarrow\C$. Then the map $\Log:\K^*\rightarrow\R^{r+s}$ is defined by $x\mapsto (\log\lvert \sigma_1(x) \rvert,\ldots,\log\lvert\sigma_r(x)\rvert,\log(\lvert\sigma_{r+1}(x)\rvert^2),\ldots,\log(\lvert\sigma_{r+s}(x)\rvert^2)$, and is called the \textit{logarithmic embedding}. Typically, when proving Dirichlet's unit theorem, one shows that $\Log(\OK^*)$ is a lattice of rank $r+s-1$. For cyclotomic fields, the minimal polynomial of $\zeta_n$ is the $n^\text{th}$ cyclotomic polynomial, whose degree is $\varphi(n)$. There can be no embeddings $\K\rightarrow\R$, so $r$ is $0$ and $s$ is $\varphi(n)/2$. Then the lattice $\Log(\OK^*)$, which we call the \textit{logarithmic lattice}, has rank $\frac{\varphi(n)}{2}-1$.

When $\varphi(n)=2$ (equivalently, $n\in\{3,4\}$), the rank is $0$. That is, $\O_\K^*$ will have finitely many units, all of which map to the zero vector under $\Log$. It is absurd to ask about the covering radius of a rank-zero lattice, so we restrict to $n\geq 5$.

We now describe some properties of lattices, as in the introduction of \cite{deAraujo2024}. The \textit{$i^\text{th}$ successive minimum} of a lattice $\Lambda$, denoted $\lambda_i(\Lambda)$, is the value $$\inf_{\substack{v_1,\ldots,v_i\in\Lambda\\\{v_1,\ldots,v_i\}\text{ is linearly independent}}}\max_{1\leq j\leq i}\norm{v_j}.$$ If $R=\lambda_i(\Lambda)$, this definition means that $\Lambda$ contains $i$ linearly independent vectors of length less than or equal to $R$, and $R$ is the least value for which this holds. The special case $\lambda_1(\Lambda)$ is the length of $\Lambda$'s shortest nonzero vector. The \textit{covering radius} of $\Lambda$, denoted $\mu(\Lambda)$, is the quantity $\max_{x\in\text{span}(\Lambda)}\min_{v\in \Lambda}\norm{x-v}.$ That is, given a fixed $x$ in the span of $\Lambda$, we find the shortest distance to a point of $\Lambda$. Then, select the largest such distance over all possible $x$. It is called a covering radius because, if $r=\mu(\Lambda)$, the set of closed Euclidean balls of radius $r$ centred at points of $\Lambda$ will cover the span of $\Lambda$, with $r$ the least value for which this holds.

The strategy of \cite{deAraujo2024}, which we also use, is to select a finite-index subgroup $C$ of $\OK^*$, which necessarily gives a full-rank sublattice $\Log\, C$ of $\Log \,\OK^*$. We have $\mu(\Log \, \OK^*)\leq\mu(\Log\, C)$. Lemma \ref{lemma:coveringRadius} provides an upper bound on $\mu(\Log \, C)$ in terms of $\lambda_r(\Log \, C)$. Lemma \ref{lemma:Ramachandra}) gives a basis for $\Log\, C$ and the lengths of the basis vectors are bounded above by Lemma \ref{lemma:theirLemma4}. But $\lambda_r(\Log \, C)$ is no more than the length of the longest basis vector. Combining these results, we get an upper bound on the covering radius of $\Log \,\OK^*$.

We now briefly discuss other work on the covering radii of logarithmic lattices. In \cite{CDPR2016}, Cramer, Ducas, Peikert and Regev assess an attack on a particular cryptographic protocol involving such lattices; part of the attack reduces to the \textit{Closest Vector Problem}. This problem asks, given a lattice $\Lambda$ and a point $P$ in its span, to find the vector $l\in\Lambda$ that is closest to $P$ (with respect to a chosen metric). The distance from $P$ to its \say{closest vector} cannot exceed the covering radius of the lattice, by definition. If we are searching for the closest vector, the covering radius indicates the size of the search space, or the hardness of the problem.

In \cite{CDPR2016}, they use the metric induced by the $\ell_\infty$ norm, and the relevant result is that the covering radius of the logarithmic lattice of $\Q(\zeta_m)$ is of order $O(\sqrt{m}\log{m})$.
In this paper and in \cite{deAraujo2024}, the metric induced by the $\ell_2$ norm is used instead.

Other authors have considered this problem for different classes of number fields. In \cite{ATPS2021}, the authors derive upper bounds for the covering radius of the logarithmic lattice of $\K$, where $\K$ is a biquadratic number field.

We now outline our improvements to the technique in \cite{deAraujo2024}. \cite[Lemma 2]{deAraujo2024} gives the inequality $$\sum_{k=1}^{\lfloor m/2 \rfloor}\log^2\left(2\sin(\pi k/m)\right)<\frac{5}{4}m,$$ for any integer $m>1$. We show that, as $m\rightarrow\infty$, this sum is asymptotic to $\frac{\pi^2}{24}m$. This result immediately implies an improved version of \cite[Lemma 3]{deAraujo2024}. We then improve the bound on the sums in \cite[Lemma 4]{deAraujo2024}, by observing that only one term in each sum can be large, and all other terms must be comparatively small. Finally, the proof of \cite[Theorem 1]{deAraujo2024} involves the product of $\frac{1}{2}\sqrt{\varphi(n)/2-1}$ and $(2^s-1)$, where $s$ is the number of distinct prime factors of $n$. The first term is largest if $s$ is small, and the second term is largest if $s$ is large. These situations cannot occur simultaneously. We can therefore improve Theorem \ref{thm:betterTheorem1} by bounding $\varphi(n)$ in terms of $s$, which we do in Lemma \ref{lemma:boundOnPhi}.

We now state the main theorem of this paper, and the main theorem of \cite{deAraujo2024}. A comparison of the two bounds for selected values of $n$ is shown in Table \ref{tab:theirTable1}.

\begin{theorem} \textbf{(de Araujo, 2024)}\label{thm:oldTheorem1}
    Let $\K=\Q(\zeta_n)$, where $n\geq 5$ and $n\not\equiv 2\pmod{4}$. The covering radius of the logarithmic lattice $\Log(\mathcal{O}_K^*)$ with respect to the $\ell^2$ norm is less than or equal to \begin{equation}\label{eq:deAraujoUpperBound} n(2^s-1)\sqrt{6},\end{equation} where $s$ is the number of distinct prime factors of $n$.
\end{theorem}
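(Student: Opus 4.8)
The plan is to follow the chain of reductions already laid out in the introduction: replace $\OK^*$ by a well-understood finite-index subgroup, bound the covering radius of the resulting full-rank sublattice by its largest successive minimum, and then control that minimum by an explicit basis. Concretely, write $r=\varphi(n)/2-1$ for the rank of $\Log\,\OK^*$ and take $C\le\OK^*$ to be the Ramachandra system of units, so that $\Log\,C$ is a full-rank sublattice of $\Log\,\OK^*$. Since a sublattice spans the same real space, the covering radius can only grow when we pass to it, giving $\mu(\Log\,\OK^*)\le\mu(\Log\,C)$. I would then invoke Lemma~\ref{lemma:coveringRadius}, which bounds $\mu(\Log\,C)$ in terms of the successive minima; the form I expect to use is $\mu(\Log\,C)\le\tfrac12\bigl(\sum_{i=1}^{r}\lambda_i(\Log\,C)^2\bigr)^{1/2}\le\tfrac{\sqrt r}{2}\,\lambda_r(\Log\,C)$, using $\lambda_i\le\lambda_r$ for all $i\le r$.

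The next step is to bound $\lambda_r(\Log\,C)$. By Lemma~\ref{lemma:Ramachandra}, the images under $\Log$ of the Ramachandra units form a basis of $\Log\,C$, and $\lambda_r(\Log\,C)$ cannot exceed the length of the longest vector in any basis; so it suffices to bound each basis vector, which is exactly the content of Lemma~\ref{lemma:theirLemma4}. This is where the arithmetic enters: each Ramachandra unit is a product of cyclotomic units, and applying the embeddings $\sigma_j$ turns the coordinates of its logarithmic image into expressions of the form $\log\lvert1-\zeta_n^{k}\rvert=\log\bigl(2\sin(\pi k/n)\bigr)$. The squared length of a basis vector is therefore a sum of such squared logarithms, which is precisely the sum controlled by \cite[Lemma~2]{deAraujo2024} (bounded above there by $\tfrac54 m$). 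The factor $2^s-1$ appears because, when $n$ has $s$ distinct prime factors, the construction assembles each unit from contributions indexed by the nonempty collections of those primes (equivalently, the nontrivial squarefree divisors of $n$), of which there are $2^s-1$; a triangle-inequality argument then produces the multiplicative factor $2^s-1$ on the bound for each basis vector.

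Assembling these, I would substitute the bound from Lemma~\ref{lemma:theirLemma4} for the longest basis vector into $\mu(\Log\,C)\le\tfrac{\sqrt r}{2}\,\lambda_r(\Log\,C)$, then simplify using $r=\varphi(n)/2-1<\varphi(n)/2\le n/2$ together with the $\tfrac54 m$ estimate, collecting the numerical constants into the single factor $\sqrt6$ to reach $n(2^s-1)\sqrt6$. The main obstacle I anticipate is not the lattice-theoretic skeleton (which is routine once Lemma~\ref{lemma:coveringRadius} is in hand) but the estimate of the basis-vector lengths in Lemma~\ref{lemma:theirLemma4}: one must correctly track how the $2^s-1$ cyclotomic-unit contributions combine and feed the $\log^2\bigl(2\sin(\pi k/n)\bigr)$ sums through \cite[Lemma~2]{deAraujo2024}, while bounding \emph{every} one of the $r$ basis vectors simultaneously rather than just a single favourable one. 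It is precisely the crudeness of the constant $\tfrac54$ at this stage, and the decoupling of the factors $\tfrac12\sqrt{r}$ and $2^s-1$, that the remainder of this paper sets out to improve.
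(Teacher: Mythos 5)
The paper does not actually prove this statement: it is quoted from de Araujo (2024), and the remark that follows it only explains how the constant must be corrected (from $\sqrt{3}$ to $\sqrt{6}$) after fixing an error in \cite[Lemma 3]{deAraujo2024}. Your outline correctly reconstructs the strategy of that proof, which is also the strategy of the paper's own Theorem \ref{thm:betterTheorem1}: pass to the Ramachandra sublattice $\Log\,C$, apply $\mu\le\frac{\sqrt r}{2}\lambda_r$, and bound each basis vector by feeding the $\log^2\bigl(2\sin(\pi k/m)\bigr)$ sums through \cite[Lemma 2]{deAraujo2024}. Two small points. First, $\Gamma$ is the set of \emph{proper} subsets of $\{1,\dots,s\}$ --- it contains $\varnothing$ and omits the full set, and the $n_I$ carry the full prime powers $p_i^{e_i}$ rather than squarefree parts --- though your count $2^s-1$ is unaffected. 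Second, to land on exactly $n(2^s-1)\sqrt 6$ you should apply the \emph{uniform} per-term bound $\sqrt{6n}$ (the corrected form of \cite[Lemma 3]{deAraujo2024}) to all $2(2^s-1)$ triangle-inequality terms, rather than this paper's Lemma \ref{lemma:theirLemma4}, which treats the $I=\varnothing$ term separately and leads to the sharper Theorem \ref{thm:betterTheorem1}; carried out that way, your assembly $\frac{\sqrt r}{2}\cdot 2(2^s-1)\sqrt{6n}$ with $r<n/2$ in fact yields $n(2^s-1)\sqrt 3$, which more than suffices for the stated bound.
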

\begin{rem}
    In \cite{deAraujo2024}, the upper bound is instead written as $n(2^s-1)\sqrt{3}$. However, there is an error in Lemma 3 of that paper, within Equation 16. The upper bound in that lemma changes from $\sqrt{3n}$ to $\sqrt{5n+1}$ once the error is corrected, which can be replaced with $\sqrt{6n}$ because $n\geq 1$.
    Also, it is easily observed that the corrected argument in \cite{deAraujo2024} implies that equality cannot hold in (\ref{eq:deAraujoUpperBound}), so we may omit the words \say{or equal to}.
\end{rem}
\begin{theorem} \label{thm:betterTheorem1}
    Let $\K=\Q(\zeta_n)$, where $n\geq 5$ and $n\not\equiv 2\pmod{4}$. The covering radius of the logarithmic lattice $\Log(\mathcal{O}_\K^*)$ with respect to the $\ell^2$ norm is less than $$\sqrt{\frac{n}{2}}\left(1-\frac{1}{\sqrt[s]{n}}\right)^{s/2}\left[(2^s-2)\sqrt{\frac{5\pi^2}{48}n+\frac{15}{2}\log^2{2}}+\sqrt{\frac{\pi^2}{6}n+6\log^2{2}}\right],$$ where $s$ is the number of distinct prime factors of $n$.
\end{theorem}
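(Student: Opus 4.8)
The plan is to assemble Lemmas~\ref{lemma:coveringRadius}, \ref{lemma:Ramachandra}, \ref{lemma:theirLemma4} and \ref{lemma:boundOnPhi} into a single chain of inequalities, exactly as outlined in the introduction. First I would fix the finite-index subgroup $C\le\OK^*$ generated by the Ramachandra units (together with the roots of unity); then $\Log\,C$ is a full-rank sublattice of $\Log\,\OK^*$, so $\mu(\Log\,\OK^*)\le\mu(\Log\,C)$ and it suffices to bound the right-hand side. Writing $r=\varphi(n)/2-1$ for the rank, Lemma~\ref{lemma:coveringRadius} controls $\mu(\Log\,C)$ by an explicit expression in $r$ and the top successive minimum $\lambda_r(\Log\,C)$, while Lemma~\ref{lemma:Ramachandra} provides a concrete basis $b_1,\dots,b_r$ of $\Log\,C$. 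Because any $i$ of the $b_j$ already furnish $i$ linearly independent vectors, $\lambda_r(\Log\,C)\le\max_j\norm{b_j}$, and the problem splits into two independent tasks: bounding the longest basis vector, and bounding $\sqrt{r}$.

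The first task is where the real work lies, and it is handled by the improved Lemma~\ref{lemma:theirLemma4}. Under $\Log$, each Ramachandra basis vector decomposes---after the triangle inequality---into a sum of at most $2^{s}-1$ contributions, indexed by inclusion--exclusion over the $s$ primes dividing $n$, each contribution being the Euclidean norm of the logarithmic image of a single cyclotomic unit, which the sharpened estimate $\sum_{k\le m/2}\log^{2}(2\sin(\pi k/m))\sim\tfrac{\pi^{2}}{24}m$ brings under control. The decisive observation is that at most one of these contributions can be large: one term may be as big as $\sqrt{\tfrac{\pi^{2}}{6}n+6\log^{2}2}$, but a refined splitting of the underlying sine-sum forces each of the remaining $2^{s}-2$ terms below the strictly smaller value $\sqrt{\tfrac{5\pi^{2}}{48}n+\tfrac{15}{2}\log^{2}2}$. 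Establishing this dichotomy---that the \say{large} regime of the sine-sum is attained for a single index only, with all others confined to the smaller regime---is the principal obstacle; once it is in place, summation gives
\[
\max_j\norm{b_j}\ \le\ (2^{s}-2)\sqrt{\tfrac{5\pi^{2}}{48}n+\tfrac{15}{2}\log^{2}2}\ +\ \sqrt{\tfrac{\pi^{2}}{6}n+6\log^{2}2}.
\]

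The second task is routine given Lemma~\ref{lemma:boundOnPhi}. From $\varphi(n)=n\prod_{p\mid n}(1-1/p)$ and $\prod_{p\mid n}p\le n$, Jensen's inequality applied to the concave map $u\mapsto\log(1-e^{-u})$ (equivalently AM--GM on the numbers $\log(1-1/p)$) gives $\prod_{p\mid n}(1-1/p)\le(1-n^{-1/s})^{s}$, hence
\[
\sqrt{r}\ =\ \sqrt{\tfrac{\varphi(n)}{2}-1}\ <\ \sqrt{\tfrac{n}{2}}\,\bigl(1-n^{-1/s}\bigr)^{s/2}.
\]
Substituting the last two displays into Lemma~\ref{lemma:coveringRadius} and simplifying the constants yields precisely the bound asserted in Theorem~\ref{thm:betterTheorem1}. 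The reason this beats the earlier argument is that the two estimates never deteriorate together: the totient bound shrinks the factor $\sqrt{r}$ exactly when $s$, and with it the combinatorial factor $2^{s}-2$, grows large, so the worst cases of the two factors can no longer coincide.
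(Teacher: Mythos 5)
Your proposal reproduces the paper's proof in structure: pass to the Ramachandra sublattice $\Log\,C$, bound $\lambda_r(\Log\,C)$ by the longest basis vector via Lemma~\ref{lemma:theirLemma4}, apply Lemma~\ref{lemma:coveringRadius}, and control $\sqrt{r}$ via Lemma~\ref{lemma:boundOnPhi}; your one-step Jensen derivation of the totient bound (concavity of $u\mapsto\log(1-e^{-u})$) is a slightly cleaner route to Lemma~\ref{lemma:boundOnPhi} than the paper's double AM--GM, but the lemma is invoked as given either way.

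The one concrete error is a factor of $2$ in the bookkeeping for $\norm{\Log\,\xi_a}$. Each $\xi_a$ is a product over $I\in\Gamma$ of \emph{quotients} $(\zeta_n^{-an_I/2}-\zeta_n^{an_I/2})/(\zeta_n^{-n_I/2}-\zeta_n^{n_I/2})$, so the triangle inequality produces $2(2^s-1)$ norm terms, as in (\ref{eq:deAraujo18}) --- not the \say{at most $2^s-1$ contributions} you describe --- and consequently \emph{two} terms (the $I=\varnothing$ term from each of the two sums) receive the large bound $\sqrt{\tfrac{\pi^2}{6}n+6\log^2 2}$, with $2(2^s-2)$ receiving the smaller one. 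This is why Lemma~\ref{lemma:theirLemma4} carries a leading factor of $2$ outside the bracket, which your displayed estimate for $\max_j\norm{b_j}$ omits. As written, your intermediate display does not follow from Lemma~\ref{lemma:theirLemma4}, and substituting it into $\mu\le\tfrac{\sqrt{r}}{2}\lambda_r$ would give \emph{half} the stated bound rather than \say{precisely} it; carrying the factor of $2$ through, it cancels the $\tfrac{1}{2}$ from Lemma~\ref{lemma:coveringRadius} and yields exactly the theorem. The rest of the chain --- $\lambda_r\le\max_j\norm{b_j}$, $\sqrt{r}<\sqrt{\varphi(n)/2}\le\sqrt{n/2}\,(1-n^{-1/s})^{s/2}$, and the remark that the two worst cases cannot coincide --- matches the paper.
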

The following bound, though crude, is perhaps more instructive. It follows immediately from Theorem \ref{thm:betterTheorem1}.
\begin{cor}\label{cor:crudeTheorem1}
    Let $\K=\Q(\zeta_n)$, where $n\geq 5$ and $n\not\equiv 2\pmod{4}$. The covering radius of the logarithmic lattice $\Log(\mathcal{O}_\K^*)$ with respect to the $\ell^2$ norm, $\mu(\Log(\mathcal{O}_\K))$, is less than $$n(2^s-1)\left(1-\frac{1}{\sqrt[s]{n}}\right)^{s/2}\sqrt{1.303},$$ where $s$ is the number of distinct prime factors of $n$.
\end{cor}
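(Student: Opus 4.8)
The plan is to start from the explicit estimate furnished by Theorem~\ref{thm:betterTheorem1} and to absorb the entire bracketed factor, together with the prefactor $\sqrt{n/2}$, into the single numerical constant $\sqrt{1.303}$, leaving only $n$, $(2^s-1)$, and the product factor $\left(1-n^{-1/s}\right)^{s/2}$. Since the corollary is asserted to follow immediately, all that is required is a short chain of elementary inequalities valid for every admissible $n\ge 5$; the factor $\left(1-n^{-1/s}\right)^{s/2}$ is common to both bounds and can be carried along untouched.

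First I would compare the two radicands appearing in the bracket. Writing $P=\frac{5\pi^2}{48}n+\frac{15}{2}\log^2 2$ and $Q=\frac{\pi^2}{6}n+6\log^2 2$, their difference is $Q-P=\frac{\pi^2}{16}n-\frac{3}{2}\log^2 2$, which is nonnegative once $n\ge \frac{24\log^2 2}{\pi^2}<2$. Hence $\sqrt{P}\le\sqrt{Q}$ for all $n\ge 2$, and in particular on the range $n\ge 5$. Because $2^s-2\ge 0$ for $s\ge 1$, this lets me replace the $(2^s-2)$-weighted first term by the second and combine: $(2^s-2)\sqrt{P}+\sqrt{Q}\le(2^s-2)\sqrt{Q}+\sqrt{Q}=(2^s-1)\sqrt{Q}$.

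Next I would fold in the prefactor $\sqrt{n/2}$, obtaining $\sqrt{\tfrac{n}{2}}\,(2^s-1)\sqrt{Q}=(2^s-1)\,n\sqrt{\frac{\pi^2}{12}+\frac{3\log^2 2}{n}}$. Because $n\ge 5\ge 3$, we have $\frac{3\log^2 2}{n}\le\log^2 2$, so the radicand is at most $\frac{\pi^2}{12}+\log^2 2=1.30292\ldots<1.303$. Reinstating the common factor $\left(1-n^{-1/s}\right)^{s/2}$ then yields the stated bound, and the strict inequality is inherited from the strict bound of Theorem~\ref{thm:betterTheorem1}.

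The only genuine ``obstacle'' is bookkeeping: one must verify the direction of each inequality and confirm that the single constant $1.303$ is large enough to cover the worst case. Since the radicand $\frac{\pi^2}{12}+\frac{3\log^2 2}{n}$ is decreasing in $n$, its supremum over the admissible range is attained at the smallest modulus; but the cruder threshold $n\ge 3$ already suffices, so no delicate numerical estimation is needed.
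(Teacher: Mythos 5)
Your proposal is correct and is precisely the derivation the paper intends: the paper gives no explicit proof, asserting only that the corollary "follows immediately" from Theorem~\ref{thm:betterTheorem1}, and your chain — replacing $\sqrt{P}$ by the larger $\sqrt{Q}$ to get the factor $(2^s-1)\sqrt{Q}$, then absorbing $\sqrt{n/2}$ to obtain the radicand $\frac{\pi^2}{12}+\frac{3\log^2 2}{n}\le\frac{\pi^2}{12}+\log^2 2<1.303$ — is exactly how the constant $1.303$ arises. All inequalities point the right way and the numerical checks are sound.
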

\begin{rem}
    Consequently, the upper bound in \cite[Theorem 1]{deAraujo2024} is correct as originally written, notwithstanding the error in \cite[Lemma 3]{deAraujo2024}.
\end{rem}

\begin{table}
    \centering
    \begin{tabular}{|c|c|c|c|c|}
    \hline
        $n$ & $s$ & $\mu(\Lambda_n)$ & Upper bound using \cite[Theorem 1]{deAraujo2024} & Upper bound using Theorem \ref{thm:betterTheorem1}\\
    \hline
        7 & 1 & 1.4 & 12.12 & 6.58\\
        9 & 1 & 2.3 & 15.59 & 8.42\\
        11 & 1 & 5.4 & 19.05 & 10.25\\
        15 & 2 & 2.7 & 77.94 & 28.39\\
        16 & 1 & 6.3 & 27.71 & 14.80\\
    \hline
    \end{tabular}
    \caption{A comparison of the actual covering radius with the upper bounds provided in this work and in \cite{deAraujo2024}, for selected values of $n$. All values are approximate.}
    \label{tab:theirTable1}
\end{table}

\section{Proof of Theorem \ref{thm:betterTheorem1}}
Let $\L$ be the maximal real subfield of $\K$, which is $\Q(\zeta_n+\zeta_n^{-1})$. Then $\O_\L^*$ is a finite-index subgroup of $\OK^*$. The following lemma, due to Ramachandra, is \cite[Lemma 1]{deAraujo2024}. See \cite[Theorem 8.3]{Washington1997} for a proof.
\begin{lemma} \label{lemma:Ramachandra}
    If $n\not\equiv 2\pmod{4}$, then let its prime factorisation be $n=\prod_{i=1}^s {p_i}^{e_i}$. Let $\Gamma$ be the set of strict subsets of $\{1,2,\ldots,s\}$, and for any $I\in\Gamma$ let $n_I=\prod_{i\in I}p_i^{e_i}$. For any integer $a$ coprime to $n$ such that $1<a<n/2$, let $$\xi_a=\prod_{I\in\Gamma} \zeta_n^{n_I(1-a)/2}\left(\frac{1-\zeta_n^{an_I}}{1-\zeta_n^{n_I}}\right).$$ Then $S=\{\xi_a:a\in\Z, 1<a<n/2, \gcd{(a,n)}=1\}$ is a set of multiplicatively independent units for $\L$. Additionally, the group generated by $-1$ and $S$, which we will call $C$, is a finite-index subgroup of $\O_\L^*$.
\end{lemma}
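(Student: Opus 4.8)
The plan is to prove three things in turn --- that each $\xi_a$ lies in $\L$ and is a unit, that $|S|$ equals the rank $\varphi(n)/2-1$ of $\O_\L^*$, and that $S$ is multiplicatively independent --- after which finite index follows formally. For the \emph{reality}, I would apply complex conjugation $\zeta_n\mapsto\zeta_n^{-1}$ to a single factor $\zeta_n^{n_I(1-a)/2}(1-\zeta_n^{an_I})/(1-\zeta_n^{n_I})$ and check it is fixed: extracting $\zeta_n^{-an_I}$ and $\zeta_n^{-n_I}$ from the conjugated numerator and denominator, the accumulated power of $\zeta_n$ is exactly $n_I(1-a)/2$, cancelling the conjugated prefactor, so each factor, and hence $\xi_a$, is real. (The prefactors make sense since $2$ is invertible modulo $n$ when $n$ is odd, and $1-a$ is even when $4\mid n$.)

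For \emph{integrality} the prefactors are roots of unity, so I only need $v_{\mathfrak p}(\xi_a)=0$ for every prime $\mathfrak p$ of $\L$, and since $1-\zeta_n^c$ is a $\mathfrak p$-unit unless $\mathfrak p$ lies over some $p\mid n$, only such $\mathfrak p$ matter. Fix $\mathfrak p\mid p_j$. Because $\gcd(a,n)=1$, the order of $\zeta_n^{an_I}$ is $n/n_I=\prod_{i\notin I}p_i^{e_i}$, which is a power of $p_j$ iff $I=\{1,\dots,s\}\setminus\{j\}$; for every other proper subset both $1-\zeta_n^{an_I}$ and $1-\zeta_n^{n_I}$ are $\mathfrak p$-units. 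For the single surviving subset, $(1-\zeta_n^{an_I})/(1-\zeta_n^{n_I})=1+\zeta_n^{n_I}+\cdots+\zeta_n^{(a-1)n_I}\equiv a\pmod{\mathfrak p}$, which is a $\mathfrak p$-unit as $p_j\nmid a$. Hence $v_{\mathfrak p}(\xi_a)=0$ throughout and $\xi_a\in\O_\L^*$. For the count, the residues coprime to $n$ in $(0,n/2)$ number $\varphi(n)/2$, and discarding $a=1$ gives $|S|=\varphi(n)/2-1=\mathrm{rank}\,\O_\L^*$, so finite index will follow the moment independence is established.

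The crux is \emph{multiplicative independence}, which I would phrase as the non-vanishing of a regulator determinant. Form the square matrix $M=(\log|\sigma_c(\xi_a)|)$ with $c,a$ ranging over representatives of $(\Z/n\Z)^*/\{\pm1\}$ other than the identity class; since $\log|\cdot|$ annihilates the root-of-unity prefactors, every entry is an integer combination of values $g(t):=\log|1-\zeta_n^t|$. Up to the trace-zero constraint --- the trivial-character direction, which is forced to be dependent by the norm/product relation among the $1-\zeta_n^t$ --- this regulator is diagonalized by the nontrivial even Dirichlet characters $\chi\bmod n$, with eigenvalues the Fourier coefficients $\sum_t\chi(t)g(t)$. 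The classical evaluation of these log-sine sums identifies each with a nonzero constant multiple of $L(1,\bar\chi)$, so Dirichlet's theorem that $L(1,\chi)\neq0$ for $\chi\neq1$ forces $\det M\neq0$.

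This character-theoretic diagonalization together with the analytic input $L(1,\chi)\neq0$ is exactly the step I expect to be the main obstacle, as it is genuinely deeper than the elementary valuation and conjugation arguments above and is precisely the computation underlying the cyclotomic-unit index formula. Granting $\det M\neq0$, the vectors $\Log\xi_a$ are linearly independent, $S$ is multiplicatively independent, and a full-rank subgroup of the finitely generated group $\O_\L^*$ (whose torsion is exactly $\{\pm1\}$) necessarily has finite index; hence $C=\langle -1, S\rangle$ is a finite-index subgroup of $\O_\L^*$.
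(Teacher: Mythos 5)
The paper does not prove this lemma at all: it is quoted as \cite[Lemma 1]{deAraujo2024} with the proof deferred to \cite[Theorem 8.3]{Washington1997}, so there is no in-paper argument to compare against. Your outline is, in structure, exactly the standard proof from that source: the conjugation computation showing each factor is fixed by $\zeta_n\mapsto\zeta_n^{-1}$ is correct (including the parity remark making $n_I(1-a)/2$ meaningful), the valuation argument is correct (the only $I\in\Gamma$ for which $1-\zeta_n^{an_I}$ fails to be a $\mathfrak p$-unit above $p_j$ is $I=\{1,\dots,s\}\setminus\{j\}$, and there the geometric-sum congruence $\equiv a\pmod{\mathfrak p}$ finishes it), and the count $|S|=\varphi(n)/2-1=\mathrm{rank}\,\O_\L^*$ is right, so finite index does reduce to independence.

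The independence step, however, is where your write-up has a genuine gap beyond the acknowledged reliance on $L(1,\chi)\neq 0$. First, a structural point: the matrix $\bigl(\log|\sigma_c(\xi_a)|\bigr)$ is not itself a group matrix, so the Dedekind determinant factorization does not apply to it directly; you first need to observe that $\xi_a=\sigma_a(\eta)/\eta$ with $\eta=\prod_{I\in\Gamma}\bigl(\zeta_n^{-n_I/2}-\zeta_n^{n_I/2}\bigr)$, so that the entries take the form $g(ca)-g(c)$ with $g(c)=\log|\sigma_c(\eta)|$, and only then does the determinant factor as $\prod_{\chi}\sum_c\chi(c)g(c)$ over nontrivial even $\chi$. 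Second, and more seriously, the claim that ``the classical evaluation identifies each eigenvalue with a nonzero constant multiple of $L(1,\bar\chi)$'' hides the actual content of Ramachandra's theorem: for an imprimitive even character $\chi$ of conductor $f\mid n$ with $f<n$, the individual sums $\sum_c\chi(c)\log|1-\zeta_n^{cn_I}|$ pick up Euler-type factors of the form $1-\bar\chi(p)$, which \emph{can vanish}. It is precisely the summation over \emph{all} proper subsets $I\in\Gamma$ in the definition of $\xi_a$ that guarantees at least one $I$ contributes a nonzero multiple of $L(1,\bar\chi)$ and that the total coefficient is nonzero; verifying this is the bulk of the proof of \cite[Theorem 8.3]{Washington1997} and explains why the units have this otherwise unmotivated shape. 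As written, your argument would apply verbatim to the naive units $\zeta_n^{(1-a)/2}(1-\zeta_n^a)/(1-\zeta_n)$ alone, which are \emph{not} in general multiplicatively independent of maximal rank when $n$ is composite -- so the missing non-vanishing analysis is not a technicality but the decisive step.
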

Then $C$ is also a finite-index subgroup of $\OK^*$. The next three lemmas will bound $\norm{\Log{(\xi_a)}}$ from above.
\begin{lemma} \label{lemma:sumAsymptotic}
    For any integer $m>1$, we have 
    $$\frac{\pi^2}{24}m-2.45-\log^2{m}<\sum_{k=1}^{\lfloor m/2 \rfloor}\log^2\left(2\sin(\pi k/m)\right)<\frac{\pi^2}{24}m+\log^2{2}.$$ In particular, $\sum_{k=1}^{\lfloor m/2 \rfloor}\log^2\left(2\sin(\pi k/m)\right)\sim \frac{\pi^2}{24}m$ as $m\rightarrow\infty$.
\end{lemma}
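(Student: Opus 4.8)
The engine of the proof is to read the sum as a Riemann sum of mesh $1/m$ for $m\int_0^{1/2}\log^2(2\sin(\pi x))\,dx$, and to evaluate that integral exactly. Writing $f(x)=\log^2(2\sin(\pi x))$, I would first establish $\int_0^{1/2}f(x)\,dx=\pi^2/24$. The cleanest route is the Fourier expansion $\log(2\sin(\pi x))=-\sum_{n\ge 1}\cos(2\pi n x)/n$, valid on $(0,1)$: squaring and integrating over $[0,1]$, the orthogonality relations kill the cross terms and leave $\tfrac12\sum_{n\ge1}n^{-2}=\tfrac12\zeta(2)=\pi^2/12$, and the symmetry $f(x)=f(1-x)$ halves this to $\int_0^{1/2}f=\pi^2/24$.

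Next I would compare $S(m):=\sum_{k=1}^{\lfloor m/2\rfloor}f(k/m)$ to $m\int_0^{1/2}f=\tfrac{\pi^2}{24}m$ using monotonicity. On $(0,1/2]$ the quantity $2\sin(\pi x)$ increases from $0$ to $2$, so $f$ decreases from $+\infty$ to $0$ on $(0,1/6]$ and increases from $0$ to $\log^2 2$ on $[1/6,1/2]$. On each monotone piece the value of $f$ at an endpoint bounds the integral of $f$ over the adjacent subinterval of length $1/m$ (being either the minimum or the maximum there), which gives clean one-sided estimates term by term. For the upper bound I would orient the subintervals so that $f(k/m)\le m\int_{I_k}f$ for every $k\le\lfloor m/2\rfloor-1$, with the $I_k$ tiling a subset of $[0,1/2]$; summing yields $\sum_{k\le\lfloor m/2\rfloor-1}f(k/m)\le\tfrac{\pi^2}{24}m$. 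The single remaining boundary term is $f(1/2)=\log^2 2$ (for even $m$) or is strictly smaller (for odd $m$, as $f<\log^2 2$ on $[0,1/2)$), accounting exactly for the additive $\log^2 2$. The key point is that the singular term $k=1$ causes no trouble here: because $f$ is decreasing near $0$, $m\int_0^{1/m}f\ge f(1/m)$, so the blow-up is dominated by the finite integral and never enters the upper estimate.

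For the lower bound the inequalities reverse, and now the singular cell $[0,1/m]$ cannot be recaptured by the sum. I would keep the term $f(1/m)$ explicitly and bound the remaining terms below by $m\int_{2/m}^{\lfloor m/2\rfloor/m}f$ (the overlaps created at the turning point $x=1/6$ only help, and the sliver near $x=1/2$ for odd $m$ contributes at most $\tfrac12\log^2 2$). This gives $S(m)\ge\tfrac{\pi^2}{24}m+f(1/m)-m\int_0^{2/m}f-\tfrac12\log^2 2$. To finish I would estimate the last two terms using Jordan's inequality $4x\le 2\sin(\pi x)\le 2\pi x$ on $[0,1/2]$ (so the logarithms are pinned between $\log(4x)$ and $\log(2\pi x)$) together with the elementary antiderivative $\int\log^2(cx)\,dx=x(\log^2(cx)-2\log(cx)+2)$. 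These yield $f(1/m)\ge(\log m-\log 2\pi)^2$ and $m\int_0^{2/m}f\le 2\log^2 m+O(\log m)$, whence $f(1/m)-m\int_0^{2/m}f=-\log^2 m+O(\log m)$, which is of the stated shape once the lower-order terms are bounded.

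The monotonicity bookkeeping is valid only once $1/m<1/6$, i.e. for $m\ge 7$; the finitely many cases $2\le m\le 6$ I would check by direct numerical evaluation. I expect the main obstacle to be precisely this careful accounting at the two bad points — the logarithmic singularity at $x=0$, which forces the isolated treatment of the $k=1$ term and produces the $-\log^2 m$ in the lower bound, and the turning point at $x=1/6$, where the subintervals must be oriented consistently. The explicit constants $\log^2 2$ and $2.45$ are pinned down by this accounting (the $2.45$ also absorbing slack from the small-$m$ cases), and the asymptotic claim $S(m)\sim\tfrac{\pi^2}{24}m$ is then immediate, since both the $\log^2 2$ and the $\log^2 m$ corrections are $o(m)$.
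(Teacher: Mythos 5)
Your proposal is correct and follows essentially the same route as the paper: both read the sum as a Riemann sum for $m\int_0^{1/2}\log^2(2\sin\pi x)\,dx=\tfrac{\pi^2}{24}m$, exploit the monotonicity of $f$ on $(0,1/6)$ and $(1/6,1/2)$ to get one-sided cell-by-cell comparisons, attribute the $+\log^2 2$ to the endpoint term at $x=1/2$ and the $-\log^2 m$ to the logarithmic singularity at $x=0$ (which the paper controls by bounding $m\int_0^{1/m}f$ via $f(u)\le\log^2(6u)$, a minor variant of your treatment of the first cell), and handle the odd-$m$ sliver near $1/2$ by the same $\tfrac12\log^2 2$ estimate. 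The only substantive addition on your side is the Fourier-series evaluation of the integral, which the paper states without proof.
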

\begin{proof}
    We consider this as a Riemann sum, and approximate it using an integral. Let $f(x)=\log^2(2\sin \pi x)$. The arguments given to this function in the sum are greater than 0, and do not exceed $1/2$. It is easy to see that $f$ is strictly decreasing on the interval $(0,1/6)$ and strictly increasing on the interval $(1/6,1/2)$. We therefore split the sum into two parts and factor out $m$:
    $$\sum_{k=1}^{\lfloor m/2 \rfloor}f(k/m)=m\sum_{k=1}^{\lfloor m/6 \rfloor}\frac{1}{m}f(k/m)+m\sum_{k=\lfloor m/6 \rfloor + 1}^{\lfloor m/2 \rfloor}\frac{1}{m}f(k/m)=S_1+S_2,$$ say. In $S_1$, we may shift the Riemann sum left by one unit so that it sits beneath $f(x)$. Therefore, we have $$S_1< m\int_{0}^{m/6}\frac{1}{m}f(k/m) \,dk= m\int_{0}^{1/6}f(u), $$ by letting $k=mu$.
    Except for the last term in its sum, $S_2$ already lies beneath $f(x)$. We have $$
        S_2=f\left(\frac{\lfloor m/2 \rfloor}{m}\right)+m\sum_{k=\lfloor m/6 \rfloor +1}^{\lfloor m/2 \rfloor - 1}\frac{1}{m}f(k/m)< f(1/2)+ m\int_{\lfloor m/6 \rfloor +1}^{\lfloor m/2 \rfloor}\frac{1}{m}f(k/m) \, dk,$$ using the fact that $\frac{\lfloor m/2 \rfloor}{m}$ is at least $1/6$ whenever $m\geq 2$.
    Substitute $k=mu$ again, and observe that $\frac{\lfloor m/6 \rfloor+1}{m}>\frac{1}{6}$ and $\frac{\lfloor m/2 \rfloor}{m}<\frac{1}{2}$. Since $f(x)$ is always nonnegative, we have $$
        S_2< \log^2{2}+ m\int_{1/6}^{1/2} f(u) \, du \implies S_1+S_2<m\int_{0}^{1/2}f(u) \, du+\log^2{2}.$$ The second inequality of the lemma follows because $\int_{0}^{1/2}f(u) \, du=\frac{\pi^2}{24}.$
    Now, we establish a lower bound. For this, it will be helpful to have a upper bound on the quantity $m\int_{0}^{1/m}f(u)\, du.$ One can show that $f(u)\leq\log^2(6u)$ if $u\in(0,1/6]$, and that $f(u)\leq \log^2(2\pi u)$ if $u\in[1/6,1/2]$.
    If $m\geq 6$, then \begin{align*}m\int_{0}^{1/m}f(u)\, du&\leq  m\int_{0}^{1/m}\log^2(6u)\, du=\log^2(6/m)-2\log(6/m)+2 \\
    &=\log^2 6-2(\log6)(\log{m})+\log^2(m)-2\log{6}+2\log{m}+2\\
    &\leq \log^2{m}+\log^2{6}-2\log{6}+2<\log^2{m}+1.627.
    \end{align*}
    If $m< 6$, then \begin{align*}m\int_{0}^{1/m}f(u)\, du&\leq m\int_{0}^{1/6}\log^2{6u}\, du+m\int_{1/6}^{1/m}\log^2{2\pi u}\, du \\
    &=\frac{m}{3}+m\int_{1/6}^{1/2}\log^2{2\pi u}\, du<m\left(\frac{1}{3}+0.19216\right)<2.628.
    \end{align*}
    The bound $m\int_{0}^{1/m}f(u)\, du\leq 2.2+\log^2{m}$ then holds for any integer $m>1$.
    
    For our lower bound, we shift $S_2$ left by one unit, so that it lies above $f(x)$. If $m$ is even, we have $\lfloor{m/2}\rfloor=m/2$, so $S_2$ (in its new position) extends to $m/2$ on the right. Therefore \begin{align*}
        S_1+S_2\geq m\int_{1}^{m/2}\frac{1}{m}f(k/m)\, dk &= m\int_{0}^{1/2}f(u)\, du - m\int_0^{1/m}f(u)\, du \\
        &> \frac{\pi^2}{24}m-2.2-\log^2{m}.
    \end{align*}
    If $m$ is odd, however, we have $\lfloor{m/2}\rfloor=(m-1)/2$, therefore \begin{align*}
        S_1+S_2&\geq m\int_{1}^{(m-1)/2}\frac{1}{m}f(k/m)\, dk =m\int_{1/m}^{(1-1/m)/2}f(u)\, du\\
        &= m\int_{0}^{1/2}f(u)\, du- m\int_{0}^{1/m}f(u)\, du -m\int_{(1-1/m)/2}^{1/2}f(u)\, du \\
        &> \frac{\pi^2}{24}m-2.2-\log^2{m}-m\int_{(1-1/m)/2}^{1/2}f(u)\, du.
    \end{align*}
    We now need an upper bound on the last term. This is easy; we have
    \begin{align*}
        m\int_{(1-1/m)/2}^{1/2}f(u)\, du &\leq  m\int_{(1-1/m)/2}^{1/2}f(1/2)\, du
        \leq m\cdot\frac{1}{2m}\log^2{2}=\frac{\log^2{2}}{2},
    \end{align*} provided that $\frac{1-1/m}{2}\geq \frac{1}{6}$, which happens if $m\geq 3/2$.
    So, for odd $m$, we find that $$S_1+S_2\geq \frac{\pi^2}{24}m-2.2-\log^2{m}-\frac{1}{2}\log^2{2}> \frac{\pi^2}{24}m-2.45-\log^2{m}.$$ Combining the results for odd and even $m$, we get the first inequality.
\end{proof}
\begin{rem}
    One could improve the upper and lower bounds on the sum in Lemma \ref{lemma:sumAsymptotic} using Euler-Maclaurin summation. However, we see that the main term is $\frac{\pi^2}{24}m$ with a small error term of order $O(\log^2{m})$, so only small improvements are possible.
\end{rem}
\begin{lemma} \label{lemma:theirLemma3}
    Let $a$ and $n>1$ be integers such that $1\leq a<n/2$ and $\gcd{(a,n)}=1$. Consider $I\in \Gamma=\{I \subsetneq [s]\}$ and $n_I=\prod_{i\in I} {p_i}^{e_i}$, as in Lemma \ref{lemma:Ramachandra}. Then
    \begin{equation}\label{eq:LogBound}\norm{\Log\left(\zeta_n^{-an_I/2}-\zeta_n^{an_I/2}\right)}<\sqrt{\frac{\pi^2}{24}n+\frac{9}{2}\log^2{2}+\frac{3\log^2{2}}{2}n_I+\frac{3\pi^2}{24}\frac{n}{n_I}}.\end{equation}
\end{lemma}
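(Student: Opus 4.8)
The plan is to turn $\norm{\Log(\zeta_n^{-an_I/2}-\zeta_n^{an_I/2})}$ into a single trigonometric sum and then invoke Lemma~\ref{lemma:sumAsymptotic}. Set $\beta=\zeta_n^{-an_I/2}-\zeta_n^{an_I/2}$. The embeddings of $\K$ send $\zeta_n\mapsto e^{2\pi i j/n}$ with $j$ running over $(\Z/n\Z)^*$, so $\sigma_j(\beta)=e^{-\pi i jan_I/n}-e^{\pi i jan_I/n}=-2i\sin(\pi jan_I/n)$ and hence $|\sigma_j(\beta)|=2|\sin(\pi jan_I/n)|$. Up to the constant factor fixed by the definition of $\Log$, each coordinate of $\Log(\beta)$ is therefore $\log\bigl(2|\sin(\pi jan_I/n)|\bigr)$, and
\[
\norm{\Log(\beta)}^2=c\sum_{j}\log^2\bigl(2|\sin(\pi jan_I/n)|\bigr),
\]
the sum running over one representative of each conjugate pair $\{j,-j\}$ in $(\Z/n\Z)^*$. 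Since $\gcd(a,n)=1$, multiplication by $a$ permutes $(\Z/n\Z)^*$, so $a$ may be dropped from the argument.

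Next I would use the arithmetic of $n_I$ from Lemma~\ref{lemma:Ramachandra}. Writing $m=n/n_I=\prod_{i\notin I}p_i^{e_i}$, the argument becomes $\pi jn_I/n=\pi j/m$, so each summand depends on $j$ only through $j\bmod m$. Because $n_I$ and $m$ are built from disjoint sets of primes they are coprime, and every $j$ coprime to $n$ is coprime to $m$; the Chinese Remainder Theorem then shows that, as $j$ ranges over $(\Z/n\Z)^*$, its reduction modulo $m$ meets each class of $(\Z/m\Z)^*$ exactly $\varphi(n_I)$ times. This collapses the expression to a constant multiple of $\varphi(n_I)\sum_{\gcd(r,m)=1}\log^2\bigl(2\sin(\pi r/m)\bigr)$, a sum over the single modulus $m$ of exactly the shape treated in Lemma~\ref{lemma:sumAsymptotic}, but restricted to residues coprime to $m$.

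Removing that coprimality restriction is the crux. As every summand is nonnegative, the coprime sum is at most the full sum over $r=1,\dots,m-1$, which by the symmetry $\sin(\pi r/m)=\sin(\pi(m-r)/m)$ is essentially twice the sum $\sum_{k=1}^{\lfloor m/2\rfloor}\log^2(2\sin(\pi k/m))$ estimated in Lemma~\ref{lemma:sumAsymptotic}; its upper bound $\tfrac{\pi^2}{24}m+\log^2 2$, together with the multiplicity bound $\varphi(n_I)\le n_I$, yields the leading term of \eqref{eq:LogBound} (recall $n=n_Im$). The function $\log^2(2\sin\pi x)$ is large only near $x=0$ and $x=\tfrac12$, so the handful of summands with $r\equiv\pm1\pmod{m}$, where $\sin(\pi r/m)$ is smallest and each term is of size about $\log^2 m$, together with the central term $\log^2 2$, must be isolated and bounded separately; it is precisely these corrections that produce the additional $\tfrac{3\pi^2}{24}\tfrac{n}{n_I}$ and the $\log^2 2$ constants appearing in \eqref{eq:LogBound}. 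Assembling the leading term with these pieces and taking a square root gives the claim.

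The step I expect to cause the most trouble is this last one: one must track the multiplicity $\varphi(n_I)\le n_I$ together with the few large summands near the singularities of $\log\sin$ — terms of size $\sim\log^2 m$ that a crude integral comparison would mishandle — and fold them into the explicit constants $\tfrac92\log^2 2$, $\tfrac32 n_I\log^2 2$ and $\tfrac{3\pi^2}{24}\tfrac{n}{n_I}$ without losing the sharp leading constant. A minor technical point is that $\beta$ may lie in $\Q(\zeta_{2n})$ rather than in $\K$ when $4\mid n$; this is harmless, since the identity $|\sigma_j(\beta)|=2|\sin(\pi jan_I/n)|$, and therefore the whole trigonometric sum, is unaffected.
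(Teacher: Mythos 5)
Your reduction of $\norm{\Log(\zeta_n^{-an_I/2}-\zeta_n^{an_I/2})}$ to a single trigonometric sum is correct and is essentially the computation behind \cite[Equation 11]{deAraujo2024}, which the paper simply quotes before splitting the range into four pieces and applying Lemma~\ref{lemma:sumAsymptotic} --- so the skeleton of your argument matches the paper's. The genuine gap is in your final assembly, and it is quantitative, not a matter of bookkeeping. Carried out exactly, your reduction gives, with $m=n/n_I$,
\begin{equation*}
\norm{\Log\left(\zeta_n^{-an_I/2}-\zeta_n^{an_I/2}\right)}^2 \;=\; c\,\varphi(n_I)\sum_{r\in(\Z/m\Z)^*}\log^2\left(2\lvert\sin(\pi r/m)\rvert\right),
\end{equation*}
and under the paper's definition of $\Log$ (which places $\lvert\sigma(x)\rvert^2$ inside the logarithm) each of the $\varphi(n)/2$ coordinates equals $2\log\lvert\sigma(x)\rvert$, forcing $c=2$ when the sum is taken over all of $(\Z/n\Z)^*$, equivalently $c=4$ over conjugate-pair representatives. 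Your bounding steps $\varphi(n_I)\le n_I$ and ``coprime sum $\le$ full sum $\le 2(\tfrac{\pi^2}{24}m+\log^2 2)$'' then yield $\tfrac{\pi^2}{6}n+4n_I\log^2 2$, whose leading coefficient $\tfrac{\pi^2}{6}=\tfrac{4\pi^2}{24}$ is \emph{four times} the $\tfrac{\pi^2}{24}$ you need. A leading-order deficit cannot be repaired by isolating the $O(1)$ many summands of size $O(\log^2 m)$ near the zeros of $\sin$, as you propose; moreover your diagnosis of the term $\tfrac{3\pi^2}{24}\tfrac{n}{n_I}$ is wrong --- in the paper it arises from the $+3$ in $\lceil n_I/4\rceil\le\tfrac{n_I+3}{4}$ multiplied by the main term, not from singular summands.

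The unresolved constant $c$ in your write-up is therefore not a formality: it is exactly the point on which the proof stands or falls. The prefactor $\lceil n_I/4\rceil$ in the quoted \cite[Equation 11]{deAraujo2024} is consistent only with the \emph{unsquared} normalisation $x\mapsto(\log\lvert\sigma_i(x)\rvert)_i$, for which your assembly would indeed recover the leading term $\tfrac{\pi^2}{24}n$. With the squared normalisation stated in this paper's introduction, your exact identity gives, for instance for $n=4q$ with $q$ prime and $n_I=4$, the value $8\sum_{k=1}^{(q-1)/2}\log^2(2\sin(\pi k/q))\sim\tfrac{8\pi^2}{24}q$, which eventually exceeds the claimed bound $\tfrac{7\pi^2}{24}q+O(1)$; that is, \eqref{eq:LogBound} is not provable as stated under that normalisation. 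Before pushing the analytic estimates further you should pin down $c$ and reconcile it with the prefactor in \cite[Equation 11]{deAraujo2024}.
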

\begin{proof}
    Our proof is almost identical to that of \cite[Lemma 3]{deAraujo2024}, but we substitute our improved bound from Lemma \ref{lemma:sumAsymptotic}, and we leave the bound in terms of $n_I$. We have, from \cite[Equation 11]{deAraujo2024}, that:
    \begin{equation}\norm{\Log\left(\zeta_n^{-an_I/2}-\zeta_n^{an_I/2}\right)}^2\leq \lceil n_I/4 \rceil \sum_{k=1}^{2m_I}\log^2\left\lvert2\sin(\pi k/m_I) \right\rvert.\label{eq:deAraujo11}\end{equation} This sum can then be split into four parts, by summing over the intervals $[1,\lfloor m_I/2\rfloor]$, $[\lfloor m_I/2\rfloor+1,m_I-1]$, $[m_I,\lfloor 3m_I/2\rfloor]$, and $[\lfloor 3m_I/2\rfloor +1,2m_I]$. It can be shown that the first and third sums do not exceed $\sum_{k=1}^{\lfloor m_I/2\rfloor}\log^2\left\lvert2\sin(\pi k/m_I) \right\rvert,$ and that the second and fourth sums do not exceed $\log^2(2)+\sum_{k=1}^{\lfloor m_I/2\rfloor}\log^2\left\lvert2\sin(\pi k/m_I) \right\rvert.$ Using (\ref{eq:deAraujo11}) and Lemma \ref{lemma:sumAsymptotic}, we find that \begin{align*}
        \norm{\Log\left(\zeta_n^{-an_I/2}-\zeta_n^{an_I/2}\right)}^2&< \lceil n_I/4 \rceil \cdot \left(\frac{4\pi^2}{24}m_I+6\log^2{2}\right) \\
        &\leq\left(\frac{n_I}{4}+\frac{3}{4}\right)\left(\frac{4\pi^2}{24}m_I+6\log^2{2}\right) \\
        &=\frac{\pi^2}{24}n+\frac{9}{2}\log^2{2}+\frac{3\log^2{2}}{2}n_I+\frac{3\pi^2}{24}\frac{n}{n_I}.
    \end{align*} The inequality $\lceil n_I/4\rceil \leq \frac{n_I+3}{4}$ holds because $n_I$ must be an integer. Taking square roots yields the result.
\end{proof}

\begin{lemma} \label{lemma:theirLemma4}
    Suppose $n\geq 5$ and $n\not\equiv 2\pmod{4}$. For any integer $a$ coprime to $n$ such that $1<a<n/2$, we have $\norm{\Log{(\xi_a)}}<2\left[(2^s-2)\sqrt{\frac{5\pi^2}{48}n+\frac{15}{2}\log^2{2}}+\sqrt{\frac{\pi^2}{6}n+6\log^2{2}}\right]$, where $s$ is the number of distinct prime factors of $n$.
\end{lemma}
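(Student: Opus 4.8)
The plan is to exploit the fact that $\Log$ is a group homomorphism that annihilates roots of unity, which reduces $\norm{\Log(\xi_a)}$ to a sum of the quantities already controlled by Lemma \ref{lemma:theirLemma3}. First I would rewrite each factor in the definition of $\xi_a$ from Lemma \ref{lemma:Ramachandra}. Using the identity $1-\zeta_n^{k}=\zeta_n^{k/2}\left(\zeta_n^{-k/2}-\zeta_n^{k/2}\right)$, each of $1-\zeta_n^{an_I}$ and $1-\zeta_n^{n_I}$ differs from $\zeta_n^{-an_I/2}-\zeta_n^{an_I/2}$ and $\zeta_n^{-n_I/2}-\zeta_n^{n_I/2}$ respectively only by a power of $\zeta_n$. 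Since every power of $\zeta_n$ is a root of unity and hence lies in $\ker\Log$, applying $\Log$ to the product defining $\xi_a$ and discarding all phase factors (including the explicit factors $\zeta_n^{n_I(1-a)/2}$) gives
\[
\Log(\xi_a)=\sum_{I\in\Gamma}\left[\Log\!\left(\zeta_n^{-an_I/2}-\zeta_n^{an_I/2}\right)-\Log\!\left(\zeta_n^{-n_I/2}-\zeta_n^{n_I/2}\right)\right].
\]
The triangle inequality then bounds $\norm{\Log(\xi_a)}$ by the sum over $I\in\Gamma$ of the two corresponding norms.

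Next I would apply Lemma \ref{lemma:theirLemma3} to each summand. The crucial observation is that the bound (\ref{eq:LogBound}) does not depend on $a$, so the numerator term (multiplier $a$) and the denominator term (the case $a=1$) obey the same estimate. Hence
\[
\norm{\Log(\xi_a)}<2\sum_{I\in\Gamma}\sqrt{\frac{\pi^2}{24}n+\frac{9}{2}\log^2 2+\frac{3\log^2 2}{2}n_I+\frac{3\pi^2}{24}\frac{n}{n_I}}.
\]
I would then isolate the single term $I=\emptyset$, for which $n_I=1$; a direct computation collapses the radicand to $\frac{\pi^2}{6}n+6\log^2 2$, producing the second bracketed term of the target. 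This is the ``one large term'': it carries the largest value of $n/n_I$, and every remaining term is strictly smaller.

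For the remaining $2^s-2$ nonempty proper subsets $I\in\Gamma$, I would use that $n_I$ is a proper divisor of $n$ containing at least one prime-power factor, so that $2\le n_I\le n/2$. The only $n_I$-dependent part of the radicand is $h(t)=\frac{3\log^2 2}{2}t+\frac{3\pi^2}{24}\cdot\frac{n}{t}$, which is convex on $(0,\infty)$ and therefore attains its maximum over $[2,n/2]$ at an endpoint. Evaluating at $t=2$ yields exactly $\frac{\pi^2}{16}n+3\log^2 2$, which upon substitution produces the radicand $\frac{5\pi^2}{48}n+\frac{15}{2}\log^2 2$, precisely the first bracketed term. It then remains to verify that the other endpoint $t=n/2$ gives a smaller value, i.e.\ $\frac{3\log^2 2}{4}n+\frac{\pi^2}{4}\le\frac{\pi^2}{16}n+3\log^2 2$; since $\frac{3\log^2 2}{4}<\frac{\pi^2}{16}$, the coefficient of $n$ on the left is the smaller, so this reduces to a linear inequality. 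Counting the $2^s-1$ members of $\Gamma$, exactly one of which is empty, and recombining the factor of $2$ yields the stated bound.

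The algebraic identity and the homomorphism bookkeeping are routine; the only real content is the convexity step, where the main point to get right is confirming that the left endpoint $t=2$ dominates. Because $\frac{\pi^2}{16}>\frac{3\log^2 2}{4}$, the endpoint comparison is a linear inequality in $n$ whose sign flips just below $n=5$, so the hypothesis $n\ge 5$ is used precisely here. I would also check the edge case $s=1$, where $\Gamma=\{\emptyset\}$ and the $(2^s-2)$-fold sum is empty, which the bound accommodates automatically.
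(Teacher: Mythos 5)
Your proposal is correct and follows essentially the same route as the paper: rewrite $\xi_a$ so that $\Log(\xi_a)$ splits over $\Gamma$ into the two families of terms (the paper simply cites this as Equation 18 of de Araujo), apply the triangle inequality and Lemma \ref{lemma:theirLemma3} to each, and note that the radicand is maximised at $n_I=1$ for the single empty set and at $n_I=2$ for the remaining $2^s-2$ subsets, your convexity/endpoint argument being exactly the verification the paper leaves to the reader. The only quibble is that your endpoint comparison actually turns at $n=4$ rather than just below $n=5$, consistent with the paper's remark that $n\ge 4$ suffices.
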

\begin{proof}
    We use the strategy from \cite[Lemma 4]{deAraujo2024}, but we substitute our bound from Lemma \ref{lemma:theirLemma3} and make some improvements. Equation 18 in \cite{deAraujo2024} is
    \begin{equation}\label{eq:deAraujo18}
        \norm{\Log(\xi_a)}\leq \sum_{I\in\Gamma}\norm{\Log\left(\zeta_n^{-an_I/2}-\zeta_n^{an_I/2}\right)}+\sum_{I\in\Gamma}\norm{\Log\left(\zeta_n^{-n_I/2}-\zeta_n^{n_I/2}\right)},
    \end{equation} which follows from Lemma \ref{lemma:Ramachandra}. In the bound from Lemma \ref{lemma:theirLemma3}, the right hand side of (\ref{eq:LogBound}) is greatest if $n_I$ is chosen to be 1 (because $\frac{3\pi^2}{24}>\frac{3\log^2{2}}{2}$). However, if $n_I$ is not 1, then it must be at least 2 and cannot exceed $n/2$ (recall that $n_I$ must be a proper divisor of $n$). Provided that $n\geq 4$, which it is, one may verify that the right hand side of (\ref{eq:LogBound}) is maximised if $n_I$ is chosen to be $2$. That is, \begin{align}\label{eq:logBoundNullSet}
        \norm{\Log\left(\zeta_n^{-an_I/2}-\zeta_n^{an_I/2}\right)}&<\sqrt{\frac{\pi^2}{6}n+6\log^2{2}}\text{ if }I=\varnothing,\\
        \label{eq:logBoundOthers}
        \norm{\Log\left(\zeta_n^{-an_I/2}-\zeta_n^{an_I/2}\right)}&<\sqrt{\frac{5\pi^2}{48}n+\frac{15}{2}\log^2{2}}\text{ for any other }I\in \Gamma.
    \end{align}

    There are $2^s-1$ elements in $\Gamma$. In each sum in (\ref{eq:deAraujo18}), we therefore get one term bounded as in (\ref{eq:logBoundNullSet}) and $2^s-2$ terms bounded as in (\ref{eq:logBoundOthers}).
    That is, $$\norm{\Log(\xi_a)}< 2\left[(2^s-2)\sqrt{\frac{5\pi^2}{48}n+\frac{15}{2}\log^2{2}}+\sqrt{\frac{\pi^2}{6}n+6\log^2{2}}\right].$$

\end{proof}

\begin{lemma} \label{lemma:boundOnPhi}
    If $n$ is a positive integer with $s$ distinct prime factors, then $$\varphi(n)\leq n\left(1-\frac{1}{\sqrt[s]{n}}\right)^s.$$
\end{lemma}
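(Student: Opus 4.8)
The plan is to reduce the claim to the standard identity $\varphi(n)=n\prod_{i=1}^{s}(1-1/p_i)$ and then to an inequality of arithmetic--geometric-mean type. Dividing by $n$, the assertion is equivalent to
$$\prod_{i=1}^{s}\left(1-\frac{1}{p_i}\right)\le\left(1-n^{-1/s}\right)^{s},$$
where I take $s\ge 1$ (the case $s=0$, i.e.\ $n=1$, being vacuous for the intended application $n\ge 5$).

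First I would remove the exponents $e_i$. Write $m=\prod_{i=1}^s p_i$ for the radical of $n$; the left-hand side above depends only on the set of primes dividing $n$, so it is unchanged whether we work with $n$ or with $m$. On the other hand $m\le n$, so $n^{-1/s}\le m^{-1/s}\le 1$, whence $0\le 1-m^{-1/s}\le 1-n^{-1/s}$, and raising to the $s$-th power gives $(1-m^{-1/s})^s\le(1-n^{-1/s})^s$. Hence it suffices to prove the squarefree case
$$\prod_{i=1}^{s}\left(1-\frac{1}{p_i}\right)\le\left(1-m^{-1/s}\right)^{s},$$
and the right-hand side is now exactly $(1-G)^s$, where $G=(\prod_{i=1}^s p_i^{-1})^{1/s}$ is the geometric mean of the numbers $1/p_1,\dots,1/p_s$.

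The heart of the argument is then the inequality $\prod_{i=1}^s(1-x_i)\le(1-G)^s$ for $x_i=1/p_i\in(0,1/2]$ with geometric mean $G$. I would establish this by Jensen's inequality: substituting $x_i=e^{y_i}$ with $y_i=\log(1/p_i)<0$, the claim becomes $\frac1s\sum_{i=1}^s\log(1-e^{y_i})\le\log\bigl(1-e^{\bar y}\bigr)$ with $\bar y=\frac1s\sum_{i=1}^s y_i$, which is precisely Jensen's inequality applied to $\phi(y)=\log(1-e^y)$ provided $\phi$ is concave on $(-\infty,0)$. A direct computation gives $\phi''(y)=-e^y/(1-e^y)^2<0$, so $\phi$ is concave and the inequality follows. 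I expect the only genuine content to be this concavity computation together with getting the direction of Jensen right; the passage from $n$ to its radical and the monotonicity in $n$ are routine. Equality throughout forces $n$ squarefree with all $1/p_i$ equal, i.e.\ $n$ prime, consistent with the $s=1$ sanity check $\varphi(p)=p-1=p(1-p^{-1})$.
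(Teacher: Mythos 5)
Your proof is correct, and it reaches the same key inequality as the paper --- namely $\bigl(\prod_{i=1}^s(1-1/p_i)\bigr)^{1/s}\le 1-\bigl(\prod_{i=1}^s 1/p_i\bigr)^{1/s}$, followed by the observation $\prod_i p_i\le n$ --- but by a genuinely different route. The paper proves that inequality with two applications of AM--GM, passing through the arithmetic mean: the geometric mean of the $1-1/p_i$ is at most their arithmetic mean $1-\frac1s\sum_i 1/p_i$, which in turn is at most $1$ minus the geometric mean of the $1/p_i$. You instead prove it in one step via Jensen's inequality for the concave function $\phi(y)=\log(1-e^y)$ on $(-\infty,0)$; your computation $\phi''(y)=-e^y/(1-e^y)^2<0$ is right, and the direction of Jensen is applied correctly. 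The paper's argument is slightly more elementary (no calculus), while yours is a single concavity argument that also cleanly identifies the equality case (strict concavity forces all $p_i$ equal, hence $s=1$ and $n$ prime), matching the paper's remark that the last inequality is sharp for squarefree $n$. Your preliminary reduction to the radical $m=\prod_i p_i$ is just a repackaging of the paper's final step $\prod_i p_i\le n$ and is handled correctly, including the sign check before raising to the $s$-th power.
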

\begin{proof}
    Let $p_1,p_2,\ldots,p_s$ be the $s$ distinct prime factors of $n$, then $\prod_{i=1}^sp_i\leq n$. We make two applications of the arithmetic mean-geometric mean (AM-GM) inequality:
    \begin{align*}
        \sqrt[s]{\prod_{i=1}^{s}\left(1-\frac{1}{p_i}\right)}&\leq\frac{s-\sum_{i=1}^{s}(1/p_i)}{s} \text{ via AM-GM} \\
        &= 1-\frac{\sum_{i=1}^s(1/p_i)}{s}\leq 1-\sqrt[s]{\frac{1}{\prod_{i=1}^{s}p_i}} \text{ via AM-GM} \\
        &=1-\frac{1}{\sqrt[s]{\prod_{i=1}^{s}p_i}}\leq 1-\frac{1}{\sqrt[s]{n}}.
    \end{align*}
    Notice that the last inequality is sharp if $n$ is squarefree. It then follows that $$
    \varphi(n)=n\prod_{i=1}^{s}\left(1-\frac{1}{p_i}\right)\leq n\left(1-\frac{1}{\sqrt[s]{n}}\right)^s.$$
\end{proof}
The following statement follows from \cite[Lemma 4.3]{GMR2005}; a proof can be found in \cite{GMR2005}. 
\begin{lemma} \label{lemma:coveringRadius}
For an $r$-dimensional lattice $\Lambda$, we have
    $\mu(\Lambda)\leq \frac{\sqrt{r}}{2}\lambda_r(\Lambda)$.
\end{lemma}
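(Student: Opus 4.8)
The plan is to reduce to a convenient full-rank sublattice and then replace the naive triangle inequality by an orthogonalisation argument, which is what produces the square-root saving. By the definition of the $r^{\text{th}}$ successive minimum, $\Lambda$ contains $r$ linearly independent vectors $b_1,\ldots,b_r$ with $\norm{b_i}\le\lambda_r(\Lambda)$ for each $i$; being $r$ independent vectors in the $r$-dimensional space $\text{span}(\Lambda)$, they span it. First I would pass to the sublattice $\Lambda'=\Z b_1+\cdots+\Z b_r\subseteq\Lambda$. Since $\Lambda'\subseteq\Lambda$, every point of the span is at least as close to $\Lambda$ as it is to $\Lambda'$, so $\mu(\Lambda)\le\mu(\Lambda')$ and it suffices to bound the covering radius of $\Lambda'$.

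Next I would Gram--Schmidt orthogonalise: let $b_1^*,\ldots,b_r^*$ be the orthogonal vectors obtained from $b_1,\ldots,b_r$, so that $\norm{b_i^*}\le\norm{b_i}\le\lambda_r(\Lambda)$. Given any $x\in\text{span}(\Lambda')$, I would round coefficients by the nearest-plane procedure, working from $b_r$ down to $b_1$: at each step one subtracts an integer multiple of $b_i$ so that the component of the residual along $b_i^*$ has absolute value at most $\tfrac12\norm{b_i^*}$. This produces a lattice point $v\in\Lambda'$ with $x-v=\sum_{i=1}^r\theta_i b_i^*$ where $|\theta_i|\le\tfrac12$. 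Because the $b_i^*$ are mutually orthogonal, Pythagoras gives
$$\norm{x-v}^2=\sum_{i=1}^r\theta_i^2\norm{b_i^*}^2\le\frac14\sum_{i=1}^r\norm{b_i^*}^2\le\frac14\, r\,\lambda_r(\Lambda)^2,$$
and taking square roots and maximising over $x$ yields $\mu(\Lambda')\le\tfrac{\sqrt r}{2}\lambda_r(\Lambda)$, hence the claim.

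The main obstacle --- and the reason the statement is not entirely trivial --- is obtaining the factor $\sqrt r$ rather than $r$. The direct estimate $\norm{\sum\theta_i b_i}\le\sum|\theta_i|\,\norm{b_i}\le\tfrac r2\lambda_r(\Lambda)$ via the triangle inequality is too lossy; the square-root saving comes precisely from orthogonalising the $b_i$ and invoking Pythagoras, which requires the nearest-plane rounding described above rather than naive coordinate-wise rounding in the basis $b_1,\ldots,b_r$. A minor point to check is that the infimum defining $\lambda_r(\Lambda)$ is attained, which holds because a lattice has only finitely many points in any ball; and one should note that $b_1,\ldots,b_r$ need not be a basis of $\Lambda$ itself, which is exactly why passing to the sublattice $\Lambda'$ (together with the monotonicity $\mu(\Lambda)\le\mu(\Lambda')$) is the right first move.
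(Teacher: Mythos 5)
Your proof is correct and complete: the nearest-plane rounding from $b_r$ down to $b_1$ combined with Pythagoras on the Gram--Schmidt vectors is the standard argument for this transference bound, and all the minor points (attainment of the infimum, passing to the full-rank sublattice $\Lambda'$ with $\mu(\Lambda)\le\mu(\Lambda')$) are handled. The paper itself offers no proof, simply deferring to \cite{GMR2005}, where essentially this same argument appears, so there is nothing to add.
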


We can now prove Theorem \ref{thm:betterTheorem1}.
\begin{proof}
    Again, this is proved in roughly the same way as \cite[Theorem 1]{deAraujo2024}. Use the basis $\{\Log(\xi_a)\mid a\in G,a\neq 1\}$ for $\Log(C)$ as in Lemma \ref{lemma:Ramachandra}, whose cardinality (i.e. the rank of $C$) is $\frac{\varphi(n)}{2}-1$. Then \begin{align*}\lambda_r(\Log{\,C})\leq \max_{a\in G, a\neq 1}\norm{\Log\,\xi_a}<2\left[(2^s-2)\sqrt{\frac{5\pi^2}{48}n+\frac{15}{2}\log^2{2}}+\sqrt{\frac{\pi^2}{6}n+6\log^2{2}}\right].\end{align*} Using Lemma \ref{lemma:coveringRadius} with $\Lambda=\Log\,C$, we have
    \begin{align*}\mu(\Log\,C)&< \frac{\sqrt{r}}{2}\times 2\left[(2^s-2)\sqrt{\frac{5\pi^2}{48}n+\frac{15}{2}\log^2{2}}+\sqrt{\frac{\pi^2}{6}n+6\log^2{2}}\right]\\
    &< \sqrt{\frac{n}{2}}\left(1-\frac{1}{\sqrt[s]{n}}\right)^{s/2}\left[(2^s-2)\sqrt{\frac{5\pi^2}{48}n+\frac{15}{2}\log^2{2}}+\sqrt{\frac{\pi^2}{6}n+6\log^2{2}}\right],\end{align*} due to Lemma \ref{lemma:boundOnPhi} and the fact that $\sqrt{r}<\sqrt{\varphi(n)/2}$.
    The result follows by noting that $\mu(\Log\,\mathcal{O}_{\mathbb{K}}^*)\leq \mu(\Log\, C)$, because $\Log\,C$ is a sublattice of $\Log\, \mathcal{O}_\mathbb{K}^*$.
\end{proof}

\section*{Acknowledgements}
The author thanks Tim Trudgian for suggesting the problem and giving feedback on the manuscript.
The author is also grateful to Robson Ricardo de Araujo for answering questions about \cite{deAraujo2024} and to L\'eo Ducas for helpful comments on the results in this paper. The author acknowledges the support of an Australian Government Research Training Program Scholarship.

\bibliographystyle{plain}
\bibliography{refs}

\begin{thebibliography}{1}

\bibitem{ATPS2021}
Fernando Azpeitia~Tellez, Christopher Powell, and Shahed Sharif.
\newblock Geometry of biquadratic and cyclic cubic log-unit lattices.
\newblock {\em J. Number Theory}, 228:276--293, 2021.

\bibitem{CDPR2016}
Ronald Cramer, L\'eo Ducas, Chris Peikert, and Oded Regev.
\newblock Recovering short generators of principal ideals in cyclotomic rings.
\newblock In {\em Advances in cryptology---{EUROCRYPT} 2016. {P}art {II}}, volume 9666 of {\em Lecture Notes in Comput. Sci.}, pages 559--585. Springer, Berlin, 2016.

\bibitem{deAraujo2024}
Robson~Ricardo de~Araujo.
\newblock An upper bound on the covering radius of the logarithmic lattice for cyclotomic number fields.
\newblock {\em Discrete Math.}, 347(1):Paper No. 113665, 6, 2024.

\bibitem{GMR2005}
Venkatesan Guruswami, Daniele Micciancio, and Oded Regev.
\newblock The complexity of the covering radius problem.
\newblock {\em Comput. Complexity}, 14(2):90--121, 2005.

\bibitem{Marcus1977}
Daniel~A. Marcus.
\newblock {\em {Number Fields}}.
\newblock Universitext. Springer-Verlag, New York-Heidelberg, 1977.

\bibitem{Washington1997}
Lawrence~C. Washington.
\newblock {\em {Introduction to Cyclotomic Fields}}, volume~83 of {\em Graduate Texts in Mathematics}.
\newblock Springer-Verlag, New York, second edition, 1997.

\end{thebibliography}

\end{document}